\newtheorem{theorem}{Theorem}[section]
\newtheorem{lemma}{Lemma}[section]
\newtheorem{corollary}{Corollary}[section]
\theoremstyle{definition}
\def\cal{\mathcal}
\begin{document}
\title[An evolutions equation as the WKB correction  \ldots]{An evolutions equation
as the WKB correction in long-time asymptotics of Schr\"odinger
dynamics\footnote{\tiny\rm  This is a preprint of an article whose
final and definitive form has
  been published in Comm. Partial Differential Equations, copyright of Taylor
  and Francis, available online at http://www.informaworld.com
}}
\author{Sergey A. Denisov}

\address{University of Wisconsin-Madison,
Mathematics Department, 480 Lincoln Dr., Madison WI, 53706-1388,
USA {\rm e-mail: denissov@math.wisc.edu}} \maketitle

\begin{abstract}
We consider the $3$--dimensional Schr\"odinger operator with
slowly decaying potential whose {\bf radial} derivatives are
short-range. The long-time asymptotics for solution of the
corresponding non-stationary equation is established. In this
case, the standard WKB-correction should be replaced by the
solution of certain evolution equation.
\end{abstract} \vspace{1cm}

 Consider the Schr\"odinger operator
\begin{equation}
H=-\Delta+V,\quad x\in \mathbb{R}^3 \label{hamiltonian}
\end{equation}

In this paper, we assume that potential $V(x)$ satisfies the
following conditions:\\

{\bf Conditions A:}
\[
 |V|< Cr^{-\gamma},
\left|\frac{\partial V}{\partial r}\right|<Cr^{-1-\gamma},
\left|\frac{\partial^2 V}{\partial
r^2}\right|<Cr^{-1-2\gamma},V(x)\in C^2(\mathbb{R}^3), \quad r=|x|
\]
and
\begin{equation}
1> \gamma>1/2
\end{equation}
We will study the scattering properties of $H$, in particular the
long-time asymptotics of the group $e^{itH}$. This group gives the
solution to the non-stationary equation: if $\psi(t)=e^{itH}f$ and
$f\in L^2(\mathbb{R}^3)$, then
\[
 \partial \psi/\partial t=iH\psi,\quad \psi(0)=f
\]
The scattering theory for Schr\"odinger operator is a classical
subject (see, e.g. \cite{Yafaev1, Yafaev2, Derezinski}). We want
to mention here several results that motivated us to write this
note. Some of them are rather old and well-known, some are quite
new.

The following theorem can be found in \cite{Yafaev1} (Theorem
4.1), \cite{Yafaev3} (see also \cite{dollard, buslaev, DG} for
other results on long-range scattering).
\begin{theorem}{\rm (Yafaev \cite{Yafaev1,
Yafaev3})}\label{long-range-theorem} Assume that
\[
|D^k V(x)|<C\langle x\rangle^{-\gamma-|k|}
\]
with $\gamma>1/2$ and any $k: |k|<k_0$, $k_0$--big enough.
Introduce
\begin{equation}
\Xi(x,t)=|x|^2/(4t)-t\int_0^1 V(sx)ds \label{long-range}
\end{equation}
and
\begin{equation}
\cal{E}_0(t)[f](x)=\exp\left[i\Xi(x,t)\right](2it)^{-3/2}\hat{f}(x/(2t))
\end{equation}
where $\hat{f}$ denotes the Fourier transform of $f$.
 Then, the limits
\begin{equation}
\cal{W}^{\pm}=s-\lim_{t\to\pm \infty} \exp(iHt)\cal{E}_0(t)
\end{equation}
exist. They are called the modified wave
operators\,\,\footnote{Actually, any $\gamma\in (0,1)$ was treated
but the modification $\Xi$ is then more involved}.
\end{theorem}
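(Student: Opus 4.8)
The plan is to prove existence of the strong limits by Cook's method (the Cook--Kuroda criterion): for a dense set of initial data $f$ I would show that the vector-valued map $t\mapsto \exp(iHt)\mathcal{E}_0(t)f$ has an $L^2$-integrable time derivative as $t\to\pm\infty$. Since $\exp(iHt)$ is unitary and, by a direct Plancherel computation, $\|\mathcal{E}_0(t)f\|_{L^2}=\|f\|_{L^2}$ for every $t\ne0$ (the phase $\exp(i\Xi)$ has modulus one and the scaling $x\mapsto x/(2t)$ exactly compensates the factor $(2t)^{-3/2}$), the family $\exp(iHt)\mathcal{E}_0(t)$ consists of isometries and is uniformly bounded. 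Hence it suffices to establish convergence on the dense set $\{f:\hat f\in C_0^\infty(\mathbb{R}^3\setminus\{0\})\}$; for such $f$ the function $\mathcal{E}_0(t)f$ is smooth and compactly supported in $x$, so it lies in the domain of $H$ and the differentiations below are legitimate. The first step is thus to write
\[
\frac{d}{dt}\bigl(\exp(iHt)\mathcal{E}_0(t)f\bigr)=\exp(iHt)\bigl(\partial_t+iH\bigr)\mathcal{E}_0(t)f
\]
and to bound $\|(\partial_t+iH)\mathcal{E}_0(t)f\|_{L^2}$ for large $|t|$.

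The heart of the matter is that $\Xi$ solves the eikonal (Hamilton--Jacobi) equation to high order. Writing $W(x)=\int_0^1 V(sx)\,ds$, so that $\Xi=|x|^2/(4t)-tW(x)$, differentiation of $sV(sx)$ in $s$ yields the key identity $W(x)+x\cdot\nabla W(x)=V(x)$, which collapses the eikonal defect to
\[
\partial_t\Xi+|\nabla\Xi|^2+V=t^2\,|\nabla W|^2 .
\]
Substituting $\mathcal{E}_0(t)f=e^{i\Xi}b$ with $b=(2it)^{-3/2}\hat f(x/(2t))$ and expanding $(\partial_t+iH)(e^{i\Xi}b)$, the terms carrying $i(\partial_t\Xi+|\nabla\Xi|^2+V)b$ reduce to $it^2|\nabla W|^2b$, while the free transport combination $\partial_t b+\tfrac{3}{2t}\,b+\tfrac{x}{t}\cdot\nabla b$ vanishes identically (this is exactly the statement that $b$ is the stationary-phase profile of the free group). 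What survives is
\[
(\partial_t+iH)\mathcal{E}_0(t)f=e^{i\Xi}\Bigl[\,it^2|\nabla W|^2\,b-t\,\Delta W\,b-2t\,\nabla W\cdot\nabla b-i\,\Delta b\,\Bigr].
\]

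The final step is to estimate these four remainder terms in $L^2$, using that on the support of $\hat f(x/(2t))$ one has $|x|\asymp t$, so that $\|b\|_{L^2}\asymp\|f\|$ while each spatial derivative of $b$ costs a factor $t^{-1}$. From the decay hypotheses one obtains $|\nabla W(x)|\lesssim\langle x\rangle^{-1-\gamma}$ and $|\Delta W(x)|\lesssim\langle x\rangle^{-2-\gamma}$ after the substitution $u=s|x|$ in the defining integral (note that only second derivatives of $V$ enter, in agreement with Conditions~A). On the support the four terms are then bounded, respectively, by $t^{2}\cdot t^{-2-2\gamma}=t^{-2\gamma}$, $t\cdot t^{-2-\gamma}=t^{-1-\gamma}$, $t\cdot t^{-1-\gamma}\cdot t^{-1}=t^{-1-\gamma}$, and $t^{-2}$, each times $O(\|f\|)$. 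I expect the main obstacle to be precisely the leading, eikonal, remainder $t^{2}|\nabla W|^{2}$: it is the only term whose decay, $t^{-2\gamma}$, is genuinely borderline, and its integrability at $\pm\infty$ is equivalent to $2\gamma>1$, i.e.\ to the hypothesis $\gamma>1/2$ (this is exactly why a more elaborate modification is needed for $\gamma\le 1/2$, as the footnote indicates). All four bounds being integrable in $t$, the Cook criterion gives $\int^{\pm\infty}\bigl\|\tfrac{d}{dt}(\exp(iHt)\mathcal{E}_0(t)f)\bigr\|\,dt<\infty$, hence convergence on the dense set, and the isometry property promotes this to the existence of $\mathcal{W}^{\pm}$ on all of $L^2(\mathbb{R}^3)$.
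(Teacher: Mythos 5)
Your proposal is correct, but note that the paper contains no proof of Theorem~\ref{long-range-theorem}: it is quoted from Yafaev as background, so there is nothing to compare line by line. What you have written is, in effect, the scalar prototype of the argument the paper does carry out for its own Theorem~\ref{main-theorem}. Both are Cook's method on the dense set $\hat f\in C_0^\infty(\mathbb{R}^3\setminus\{0\})$, both factor the approximate evolution into the free profile $(2it)^{-3/2}e^{i|x|^2/(4t)}$ (annihilated by $i\partial_t+\Delta$) times a correction, and both hinge on a cancellation of the leading term: in your case the eikonal identity $\partial_t\Xi+|\nabla\Xi|^2+V=t^2|\nabla W|^2$, obtained from $W+x\cdot\nabla W=V$, kills the order-one contribution and leaves remainders of size $t^{-2\gamma}$, $t^{-1-\gamma}$, $t^{-2}$; in the paper the corresponding cancellation is not algebraic but is enforced by \emph{defining} the correction $W(k,\tau)$ as the solution of the evolution equation (\ref{evolution}), after which the same remainder bookkeeping is done via the derivative estimates of Lemma~\ref{main-lemma} (again yielding $\|R\|\lesssim t^{-2\gamma}$, integrable precisely for $\gamma>1/2$). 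Your computation is sound --- the transport combination $\partial_t b+\tfrac{3}{2t}b+\tfrac{x}{t}\cdot\nabla b$ does vanish, the bounds $|\nabla W|\lesssim\langle x\rangle^{-1-\gamma}$ and $|\Delta W|\lesssim\langle x\rangle^{-2-\gamma}$ follow as you say, and all four remainders are integrable --- and it even shows that for mere existence of $\mathcal{W}^{\pm}$ two derivatives of $V$ suffice, rather than the ``$k_0$ big enough'' of the stated hypothesis (the extra smoothness in Yafaev's formulation is needed for finer results such as completeness). The one thing your scalar argument cannot survive is the weakening of hypotheses that is the actual point of this paper: if only radial derivatives of $V$ decay, the term $x\cdot\nabla W$ no longer controls the full gradient, $t^2|\nabla W|^2$ is not integrable, and the scalar phase $e^{-itW}$ must be replaced by the operator-valued correction solving (\ref{evolution}).
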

Note that the modification is done in the physical space by the
integral of potential over the segments $\{ tx, t\in [0,1]\}$. We
have
\[
\exp\left[i\Xi(x,t)\right]=\exp\left[i|x|^2/(4t)\right]\cdot
\exp\left[-it\int_0^1 V(sx)ds\right]
\]
and the second factor will be called the  WKB-correction in the
long-time asymptotics of $\exp(itH)$. For the short-range case
(i.e., when $\gamma>1$) this factor can be discarded.
 Perhaps, the definition of wave (modified wave) operators in momentum space is
 more standard
(see, e.g. \cite{Yafaev1, Hormander}). Nevertheless, in our paper,
we will be working in the physical space only.

The existence of modified wave operators per se does not imply
that the spectrum of  $H$ is purely absolutely continuous on
positive half-line. Therefore, the following result (see, e.g.,
\cite{Yafaev1}, Theorem 4.2 and references therein) complements
the Theorem~\ref{long-range-theorem}.
\begin{theorem} \label{mourre}
Assume that $V(x)$ is such that
\[
\left|\frac{\partial V}{\partial r}\right|<Cr^{-1-}
\]
Then, the positive spectrum of $H$ is absolutely continuous.
\end{theorem}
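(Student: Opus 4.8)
The plan is to prove pure absolute continuity on $(0,\infty)$ by the Mourre commutator method, taking the generator of dilations as the conjugate operator, and then to supplement the resulting Mourre estimate with an argument excluding embedded positive eigenvalues. Write the hypothesis as $|\partial V/\partial r|<Cr^{-1-\varepsilon}$ for some $\varepsilon>0$. Since $V$ is $C^2$ and bounded (it decays like $r^{-\gamma}$ and is continuous at the origin), $H=H_0+V$ with $H_0=-\Delta$ is self-adjoint on $D(H_0)=H^2(\mathbb{R}^3)$, so everything below is taken on that domain.

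First I would set $A=\tfrac12(x\cdot p+p\cdot x)$ with $p=-i\nabla$, the self-adjoint generator of the dilation group, and check that $H$ is of class $C^{1,1}(A)$ (equivalently $C^2(A)$ here), i.e.\ the commutators below are well defined and $H_0$-bounded. A direct computation gives
\[
i[H,A]=2H_0-r\frac{\partial V}{\partial r}=2H-\Bigl(2V+r\frac{\partial V}{\partial r}\Bigr),
\]
so the commutator is controlled by the \emph{radial} derivative alone. Next I would note that both $V$ and $r\,\partial V/\partial r$ vanish at infinity: the hypothesis gives $|r\,\partial V/\partial r|<Cr^{-\varepsilon}$, and integrating $\partial V/\partial r$ from infinity (where $V\to 0$) yields $|V|<C\varepsilon^{-1}r^{-\varepsilon}$. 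Being bounded and decaying, each is $H_0$-compact, hence $(2V+r\,\partial V/\partial r)(H_0+1)^{-1}$ is compact.

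From this, for any compact interval $\Delta=[\lambda_0-\delta,\lambda_0+\delta]\subset(0,\infty)$ the Mourre estimate follows: using $E_\Delta(H)HE_\Delta(H)\ge(\lambda_0-\delta)E_\Delta(H)$ and the compactness of $E_\Delta(H)\bigl(2V+r\,\partial V/\partial r\bigr)E_\Delta(H)$,
\[
E_\Delta(H)\,i[H,A]\,E_\Delta(H)\ \ge\ \theta\,E_\Delta(H)+K,\qquad \theta=2(\lambda_0-\delta)>0,
\]
with $K$ compact. By Mourre's theorem this strict-plus-compact estimate implies that in $\Delta$ the point spectrum is finite with finite multiplicities, that there is no singular continuous spectrum, and that a limiting absorption principle holds away from eigenvalues; letting $\Delta$ exhaust $(0,\infty)$ removes the singular continuous spectrum from the whole positive half-line.

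The main obstacle is that the Mourre estimate by itself only makes positive eigenvalues discrete, not absent, so the remaining work is to exclude \emph{embedded positive eigenvalues}. This is exactly where the short-range radial derivative is decisive: the virial identity for a hypothetical eigenfunction $H\psi=E\psi$, $E>0$, reads $2E\|\psi\|^2=\langle\psi,(2V+r\,\partial V/\partial r)\psi\rangle$, and I would run the Froese–Herbst exponential-decay bootstrap, whose hypotheses are precisely the $H_0$-compactness of $V$ and of $r\,\partial V/\partial r$ established above. This forces $\psi$ to decay faster than any exponential, whence $\psi\equiv 0$ by unique continuation. Combining the absence of singular continuous spectrum with the absence of positive eigenvalues yields that the spectrum of $H$ on $(0,\infty)$ is purely absolutely continuous. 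I expect the verification of the $C^{1,1}(A)$ regularity under the mild $C^2$ assumption, and the careful execution of the eigenvalue-exclusion step, to be the most technical parts.
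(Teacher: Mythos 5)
The paper does not prove this theorem; it cites it as a known result (\cite{Yafaev1}, Theorem 4.2) and remarks only that ``the proof of this result is based on the Mourre estimates.'' Your proposal --- a Mourre estimate with the dilation generator $A=\tfrac12(x\cdot p+p\cdot x)$, whose commutator $i[H,A]=2H-(2V+rV_r)$ is controlled by the radial derivative alone, supplemented by a Froese--Herbst exclusion of embedded positive eigenvalues --- is exactly the standard argument the paper is referring to, and the sketch is correct.
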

The proof of this result is based on the Mourre estimates and also
shows that the absorption principle holds in the suitable weighted
spaces. It also allows to handle the short-range perturbations.
Other proofs (see, e.g., \cite{BP1, S}) are using various PDE
techniques and can handle more general cases.

In the meantime, we are not aware of any results where the
existence of modified wave operators is proved in the case when
the strong decay is assumed only for the radial derivative of the
potential and nothing is known about the tangential component of
the gradient. If, instead, the tangential component of the
gradient is decaying fast, then the recent result by Perelman
\cite{Perelman} shows that the a.c. spectrum of $H$ fills
$\mathbb{R}^+$ but it can coexist with singular spectrum. In
\cite{Perelman}, the WKB correction for the spatial asymptotics of
Green's function is proved to be similar to that in the
Theorem~\ref{long-range-theorem}: the modification comes from the
integration of potential over the segments. In papers \cite{H1,
H2}, the scattering theory for Hamiltonians with zero-degree
homogeneous potentials was considered. The dynamics in this case
is nontrivial and the analysis is quite intricate.

In this paper, we consider potential that satisfies Conditions A.
The Theorem~\ref{mourre} guarantees that the positive spectrum is
purely absolutely continuous. Our goal is to obtain an analog of
Theorem~\ref{long-range-theorem}. In our opinion, the most
interesting part of this paper is new WKB correction to the
asymptotics\footnote{C. G\'{e}rard kindly mentioned to us that
somewhat similar corrections were obtained in Sections 3.6 and 4.8
of \cite{Derezinski}.}. This correction is not so surprising in
view of recent calculations done for sparse slowly decaying
potentials (see discussion in \cite{Denisov1}).

The paper is divided into two parts. In the first one, we
introduce and study the evolution equation which gives the right
WKB correction. The second part contains the proof of the main
Theorem, several corollaries and examples. We will use the
following notations: $\langle x\rangle$ stands for
$(|x|^2+1)^{1/2}$, $\cal{F}$ denotes the Fourier transform
normalized to be unitary map from $L^2(\mathbb{R}^3)$ onto itself,
the symbol $D$ denotes the differential of a function, $\Sigma$
stands for the two-dimensional unit sphere, $B$ denotes the
(positive) Laplace-Beltrami operator on $\Sigma$. The symbols
$H^\alpha(\Sigma)$ stand for Sobolev spaces with index $\alpha$.
For any vector $x\neq 0, x\in \mathbb{R}^3$, we write
$\hat{x}=x/|x|$. For smooth function $f(x), x\in \mathbb{R}^3$,
$f_r$ will mean the radial derivative and $r=|x|$. Symbol
$\cal{D}(A)$ stands for the domain of definition of self-adjoint
operator $A$ and $E_\lambda(A)$ is its
orthoprojector.\vspace{0.5cm}

{\bf Acknowledgement.} We are grateful to A. Ionescu and A.
Kiselev for useful discussions and to  C. G\'{e}rard, I. Herbst,
and D. Yafaev for providing us with some very important
references.

\section{Evolution equation}
In this section, we discuss one very special evolution equation
which will play an important role later. It can be easily studied
by well-known methods but we will give all details for
completeness.

Consider the following evolution equation:

\begin{equation}\label{evolution}
iky_\tau(\tau,\theta)=\frac{(By)(\tau,\theta)}{\tau^2}+V(\tau,\theta)y(\tau,\theta),
\tau>0
\end{equation}
where $k\in \mathbb{R}\backslash \{0\}$, $V(\tau,\theta)$ is
real-valued, and the function $y(\tau,\theta)\in L^2(\Sigma)$ for
any $\tau> 0$. Later on, we will let
$V(\tau,\theta)=V(\tau\cdot\theta)$ where the potential $V(x)$ is
the Schr\"odinger potential introduced before. Let us define and
study $U(k,\tau_0,\tau)f$-- the solution of (\ref{evolution})
satisfying an initial condition $U(k,\tau_0, \tau_0)f=f$ where
$\tau, \tau_0> 0$ and $f\in L^2(\Sigma)$. Note first that if
$V=0$, then we can write the solution in terms of more familiar
Schr\"odinger evolution on the sphere:
\[
U_0(k,\tau_0,\tau)=\exp\left[ \frac{1}{ik}
\frac{\tau-\tau_0}{\tau\tau_0} B\right]
\]
For any fixed $f\in L^2(\Sigma)$, the following limit exists
\[
\lim_{\tau\to\infty} U_0(k,\tau_0,\tau)f=\exp\left[ \frac{1}{ik}
\frac{B}{\tau_0} \right]f
\]
Then, $U(k,\tau_0,\tau)$ allows the formal Duhamel expansion
involving $U_0(k,s,t)$ and $V(s)$. Assuming
$\|V(\tau,\cdot)\|_{L^\infty(\Sigma)}\in L^1_{\rm
loc}(\mathbb{R}^+)$ one can easily prove convergence of this
expansion in $L^2(\Sigma)$ and we {\bf define} $U(k,\tau_0,\tau)$
in this way. For simplicity, from now on we assume that $V\in
C^2((0,\infty)\times \Sigma)$. Then, for $f\in H^2(\Sigma)$,
$U(k,\tau_0,\tau)f$ gives the actual solution to differential
equation. Notice that $U(k,\tau_0,\tau)$ is unitary on
$L^2(\Sigma)$ and
$U^*(k,\tau_0,\tau)=U^{-1}(k,\tau_0,\tau)=U(k,\tau,\tau_0)$. This
is an elementary property of any evolution equation with
self-adjoint coefficients. If $V(\tau,\theta)$ decays fast enough,
then the perturbation theory easily yields existence of the strong
limit for $U(k,\tau_0,\tau)$ as $\tau\to +\infty$. For instance,
we have the following elementary Lemma

\begin{lemma}\label{l-1}
If $\|V(\tau,\cdot)\|_\infty\in L^1[1,\infty)$, the the limit
\[
\lim_{\tau\to\infty} U(k,\tau_0,\tau)f=U(k,\tau_0,\infty)f
\]
exists for any $f\in L^2(\Sigma)$.
\end{lemma}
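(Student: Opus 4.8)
The plan is to use Cook's method, but in an interaction picture. The only subtlety is that the free propagator $U_0(k,\tau_0,\tau)$ does \emph{not} converge to the identity; it converges strongly to $\exp[(ik)^{-1}B/\tau_0]$. Consequently a naive Cook estimate applied to $U(k,\tau_0,\tau)$ itself is useless, since its $\tau$-derivative contains the non-integrable free phase $\tau^{-2}B$. The fix is to factor the free dynamics out before estimating.

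Concretely, I would set $\Phi(\tau):=U_0(k,\tau,\tau_0)\,U(k,\tau_0,\tau)$, which is unitary and satisfies $\Phi(\tau_0)=I$. Differentiating in $\tau$ and using that $U_0(k,\tau,\tau_0)$ is a function of $B$ and hence commutes with $\tau^{-2}B$, the two contributions of the $\tau^{-2}B$ term cancel and one is left with $\Phi'(\tau)=(ik)^{-1}\,U_0(k,\tau,\tau_0)\,V(\tau,\cdot)\,U(k,\tau_0,\tau)$. Because $U_0(k,\tau,\tau_0)$ and $U(k,\tau_0,\tau)$ are unitary and $V(\tau,\cdot)$ acts by multiplication on $L^2(\Sigma)$, this yields the pointwise bound $\|\Phi'(\tau)f\|\le |k|^{-1}\,\|V(\tau,\cdot)\|_\infty\,\|f\|$ for each fixed $f$.

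Integrating and invoking the hypothesis $\|V(\tau,\cdot)\|_\infty\in L^1[1,\infty)$ (the compact interval $[\tau_0,1]$, if present, is harmless since $V$ is locally bounded), the integral $\int_{\tau_0}^{\infty}\Phi'(s)f\,ds$ converges absolutely in $L^2(\Sigma)$. Hence $\Phi(\tau)f=f+\int_{\tau_0}^{\tau}\Phi'(s)f\,ds$ has a limit $\Phi(\infty)f$ as $\tau\to\infty$. Finally I would recover $U(k,\tau_0,\tau)f=U_0(k,\tau_0,\tau)\,\Phi(\tau)f$ and pass to the limit using the $\varepsilon/2$ splitting: $U_0(k,\tau_0,\tau)$ is uniformly bounded by $1$ and converges strongly, while $\Phi(\tau)f\to\Phi(\infty)f$ in norm, so the product converges to $U_0(k,\tau_0,\infty)\,\Phi(\infty)f=:U(k,\tau_0,\infty)f$.

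The differentiation step is literally valid for $f\in H^2(\Sigma)$, where $U(k,\tau_0,\tau)f$ is a genuine solution of the equation; to reach arbitrary $f\in L^2(\Sigma)$ I would argue by density, exploiting that $\{U(k,\tau_0,\tau)\}$ is uniformly bounded (unitary), so strong convergence on the dense set $H^2(\Sigma)$ extends to all of $L^2(\Sigma)$. Equivalently, one can phrase the entire estimate directly on the Duhamel integral defining $U$, which is valid for every $f$. I do not anticipate a real obstacle; the single point demanding attention is exactly the factorization $\Phi=U_0^{*}U$, which strips away the non-decaying free phase and is what makes Cook's integrability criterion applicable.
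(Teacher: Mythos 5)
Your proposal is correct and is essentially the paper's own argument: the operator $\Phi(\tau)=U_0(k,\tau,\tau_0)U(k,\tau_0,\tau)$ is exactly the $Y(\tau)$ in the paper's factorization $U=U_0Y$, and both proofs then bound $\|\Phi'(\tau)f\|\le |k|^{-1}\|V(\tau,\cdot)\|_\infty\|f\|$, invoke integrability and the Cauchy criterion, and finish with the strong convergence of $U_0(k,\tau_0,\tau)$. Your extra remarks on the density/domain issue are a welcome refinement but do not change the route.
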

\begin{proof}
If $U(k,\tau_0,\tau)=U_0(k,\tau_0,\tau)Y(\tau)$, then
\[
Y_\tau(\tau)=U_0(k,\tau_0,\tau)^{-1}V(\tau)U(k,\tau_0,\tau)Y(\tau)
\]
That yields $\|Y\|$ is bounded in $\tau$. Since $\|Y_\tau\|\in
L^1[1,\infty)$, the Cauchy criteria guarantees that $Y(\tau)$
converges to $Y(\infty)$ in norm topology.  $U_0(k,\tau_0,\tau)$
converges to $U_0(k,\tau_0,\infty)$ strongly and so we get the
statement of the Lemma.
\end{proof}

Consider the following conditions that are essentially identical
to Conditions A:

{\bf Conditions A1:}
\begin{equation}\label{V-estimates}
 \|V(\tau,\cdot)\|_{L^\infty(\Sigma)}< C\tau^{-\gamma},
\left\|\frac{\partial V}{\partial
\tau}(\tau,\cdot)\right\|_{L^\infty(\Sigma)}<C\tau^{-1-\gamma},
\left\|\frac{\partial^2 V}{\partial
\tau^2}(\tau,\cdot)\right\|_{L^\infty(\Sigma)}<C\tau^{-1-2\gamma}
\end{equation}
and
\begin{equation}
V(\tau,\theta)\in C^2([1,\infty)\times\Sigma), \tau\geq 1,1>
\gamma>1/2
\end{equation}
 As we will see later, the solution to evolution equation
does not have to have any limit as $\tau\to\infty$ in this case.
In the meantime, one can obtain some estimates on the decay of
various derivatives.  From now on, the norm $\|\cdot\|$ means
$L^2(\Sigma)$ norm. For any $f\in L^2(\Sigma)$, denote
\begin{equation}
W(k,\tau)f=U(k,1,\tau)f \label{doublev}
\end{equation}

\begin{lemma}\label{main-lemma}
Assume that $V$ satisfies Conditions A1 and  $k\in I\subset
\mathbb{R}^+$, $f\in H^4(\Sigma)$. Then, the following estimates
hold true
\begin{equation}\label{der-tau}
\|W_\tau(k,\tau)f\|<C\tau^{-\gamma}
\end{equation}
\begin{equation}\label{der-tau-tau}
\|W_{\tau \tau}(k,\tau)f\|<C \tau^{-2\gamma}
\end{equation}
\begin{equation}\label{der-k}
\|W_k (k,\tau)f\|<C\tau^{1-\gamma}
\end{equation}
\begin{equation}\label{der-tau-k}
\|W_{\tau k}(k,\tau)f\|<C \tau^{1-2\gamma}
\end{equation}
\begin{equation}\label{der-k-k}
\|W_{kk}(k,\tau)f\|<C \tau^{2-2\gamma}
\end{equation}
where the constant $C$ depends on $I$, $\|f\|_{H^4(\Sigma)}$, and
constants in the Conditions A1.
\end{lemma}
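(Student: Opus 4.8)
The plan is to establish all five estimates by working with the equation satisfied by $W(k,\tau)f = U(k,1,\tau)f$, namely the evolution equation \eqref{evolution}, and then differentiating it successively in $\tau$ and $k$. The basic principle throughout is that $W(k,\tau)$ is a unitary operator on $L^2(\Sigma)$, so $\|W(k,\tau)g\| = \|g\|$ for any $g \in L^2(\Sigma)$; this lets me convert differential relations into norm estimates. The starting point is the bound \eqref{der-tau}: from the equation, $W_\tau f = \frac{1}{ik}\bigl(\tau^{-2}B + V\bigr)Wf$. Since $W$ is unitary and commutes in norm with nothing in general, I would instead note that $W_\tau f = W \cdot \frac{1}{ik}(\tau^{-2}B + V)f$ is \emph{not} quite right because $B$ does not commute with $W$; the correct route is to write the right-hand side as $\frac{1}{ik}(\tau^{-2}(BW)f + VWf)$ and control $\|BWf\|$ separately. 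So the real first task is a Sobolev-type a priori bound showing $\|BW(k,\tau)f\|$ stays comparable to $\|Bf\| = \|f\|_{H^2}$ up to constants, uniformly on $I$.

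The key auxiliary step, then, is to propagate $H^2$ and $H^4$ regularity: I would show that for $f \in H^2(\Sigma)$ the quantity $\|BWf\|$ grows at most polynomially (in fact, I expect it to remain bounded thanks to the $\tau^{-2}$ prefactor on $B$ and the finite total mass of $V$ under Conditions A1, since $\gamma > 1/2$ makes $\tau^{-\gamma}$ integrable after the first derivative is taken — though $\tau^{-\gamma}$ itself is not integrable, so some care with cumulative growth is needed). Concretely, using unitarity, $\frac{d}{d\tau}\|BWf\|^2 = 2\Re\langle BW_\tau f, BWf\rangle$, and since $B$ commutes with $U_0$ but not with multiplication by $V$, the only contribution comes from the commutator $[B,V]$ term, which is controlled by the $C^2$ smoothness of $V$ in the angular variable (tangential derivatives of $V$) together with the decay in $\tau$. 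This gives a Grönwall-type bound on $\|BWf\|$, and hence \eqref{der-tau} follows by plugging into the equation and using $\|V\|_\infty \le C\tau^{-\gamma}$ together with the $\tau^{-2}\|BWf\|$ term being negligible. The estimate \eqref{der-tau-tau} is obtained analogously by differentiating the equation once more in $\tau$, which produces the term $\partial_\tau V$ bounded by $\tau^{-1-\gamma}$ (hence integrable) and the $-2\tau^{-3}B$ term; combining these with the already-established bound on $\|W_\tau f\|$ yields the $\tau^{-2\gamma}$ rate.

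For the $k$-derivatives I would differentiate \eqref{evolution} with respect to $k$. Writing $G = W_k f$, one finds $ik\,G_\tau + i\,W_\tau f = \frac{1}{\tau^2}BG + VG$, i.e. $G$ solves the same evolution equation as $W$ but with an inhomogeneous forcing term proportional to $W_\tau f$. By Duhamel's formula, $W_k f = \frac{1}{ik}\int_1^\tau W(k,\tau)W(k,s)^{-1}\bigl(\text{source}(s)\bigr)\,ds$, and since the source is essentially $W_\tau f$ of size $\tau^{-\gamma}$ by \eqref{der-tau}, integrating $\int_1^\tau s^{-\gamma}\,ds \sim \tau^{1-\gamma}$ gives \eqref{der-k}. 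The mixed estimate \eqref{der-tau-k} follows by differentiating the relation for $W_\tau f$ in $k$ and using the bounds already obtained, with the gain coming from the integrand now decaying like $\tau^{-2\gamma}$. Finally \eqref{der-k-k} comes from differentiating the inhomogeneous equation for $W_k f$ once more in $k$: the forcing now involves $W_{\tau k} f$ of size $\tau^{1-2\gamma}$ and, after one more Duhamel integration $\int_1^\tau s^{1-2\gamma}\,ds \sim \tau^{2-2\gamma}$ (valid since $\gamma > 1/2$ keeps the growth sub-quadratic), the claimed rate emerges. The main obstacle I anticipate is the very first step: controlling $\|BWf\|$ uniformly, because $B$ genuinely fails to commute with the potential, so one must carefully track how tangential smoothness of $V$ (implicit in $V \in C^2$) interacts with the propagator — and it is exactly to have room for this that the hypothesis requires $f \in H^4(\Sigma)$ rather than merely $H^2$, since the $k$-differentiation chain consumes regularity.
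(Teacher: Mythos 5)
Your overall strategy---differentiate the equation in $\tau$ and $k$, treat each derivative as solving the same evolution equation with an inhomogeneous source, and close the estimates by Duhamel plus unitarity of $U(k,s,\tau)$---is the right skeleton, and your treatment of \eqref{der-k} and \eqref{der-k-k} essentially matches the paper. But there is a genuine gap at the step you yourself flag as the main obstacle, and the fix you propose does not work. You want to control $\|BW(k,\tau)f\|$ by a Gr\"onwall argument based on $\frac{d}{d\tau}\|BWf\|^2=2\Re\langle BW_\tau f,BWf\rangle$, with the only surviving contribution coming from the commutator $[B,V]$, ``controlled by the $C^2$ smoothness of $V$ in the angular variable.'' Conditions A1 give \emph{no quantitative decay whatsoever} on the tangential derivatives of $V$: only $\|V\|_\infty$, $\|\partial_\tau V\|_\infty$, $\|\partial^2_\tau V\|_\infty$ decay, while $D_\theta V$ and $D^2_\theta V$ are merely continuous. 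In the Example at the end of Section 1 (which is the whole point of the lemma), $V\sim \tau^{-\gamma}\sin(2^n\phi)$ on $\tau\in[2^n,2^{n+1}]$, so $\|D_\theta V\|_\infty\sim\tau^{1-\gamma}$ grows. The commutator term $\|[B,V]y\|\lesssim \|BV\|_\infty\|y\|+\|D_\theta V\|_\infty\|Dy\|$ is therefore not integrable, Gr\"onwall gives uncontrolled growth, and indeed $\|BWf\|$ is \emph{not} bounded in general (a posteriori one only gets $\|\tau^{-2}BWf\|\le C\tau^{-\gamma}$, i.e. $\|BWf\|\le C\tau^{2-\gamma}$). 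So \eqref{der-tau} cannot be obtained by declaring the $\tau^{-2}\|BWf\|$ term negligible.

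The paper's mechanism avoids ever bounding $\|By\|$ a priori: it is an algebraic cancellation. Writing $u=y_\tau$, the source in the equation for $u$ is $[V_\tau-2\tau^{-3}B]y$; but the original equation gives $\tau^{-2}By=iky_\tau-Vy$, so adding $2\tau^{-1}$ times the original equation to the $u$-equation cancels the $-2\tau^{-3}By$ term exactly, leaving the harmless source $[V_\tau+2V/\tau]y$. The substitution $\phi=\tau^2 u$ absorbs the resulting $2u/\tau$ term, and Duhamel with unitarity gives $\|\phi\|\le C\|f\|_{H^2}+C\int_1^\tau s^{2}\,s^{-1-\gamma}ds\le C\tau^{2-\gamma}$, hence $\|u\|\le C\tau^{-\gamma}$. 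The same trick (adding $4\tau^{-1}$ times the $u$-equation, resp. $2\tau^{-1}$ times the $\chi$-equation) is what disposes of the $B y_\tau$ and $B y_k$ source terms in the proofs of \eqref{der-tau-tau} and \eqref{der-tau-k}; your sketch of the mixed derivative silently relies on the same unavailable control of $By_k$. Two smaller corrections: the $H^4$ hypothesis is consumed not by the $k$-differentiation chain (the initial data for $y_k$ and $y_{kk}$ vanish) but by the initial value $\psi(1)=(ik)^{-2}(B+V)^2|_{\tau=1}f+\dots$ of the second $\tau$-derivative, which involves $B^2f$; and your parenthetical hope that $\|BWf\|$ ``remains bounded'' is false even in the model cases the lemma is designed for.
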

\begin{proof}
Let $y=W(k,\tau)f$ and $u=y_\tau$. We have
\begin{equation} \label{eq1}
iky_\tau=\left[\frac{B}{\tau^2}+V\right]y,\,ik
u_\tau=\left[\frac{B}{\tau^2}+V\right]u+\left[V_\tau
-2\frac{B}{\tau^3}\right]y
\end{equation}
Multiply the first
equation by $2\tau^{-1}$ and add to the second one to get
\[
ik\left[ u_\tau+2\frac{u}{\tau}\right]=\left[\frac{B}{\tau^2}+V
\right]u+\left[V_\tau +2\frac{V}{\tau}\right]y
\]
Let
\begin{equation}
\phi=\tau^2 u \label{denote-u}
\end{equation}
 Then the equation can be rewritten as
\begin{equation}\label{phi-eq}
ik\phi_\tau=\left[\frac{B}{\tau^2}+V
\right]\phi+\tau^2\left[V_\tau +2\frac{V}{\tau}\right]y,\,
\phi(1)=(Bf+V(1)f)/(ik)
\end{equation}
Therefore,
\[
\phi(\tau)=U(k,1,\tau)\phi(1)+\int_1^\tau s^2 U(s,\tau)\left[
V_\tau+2\frac{V}{s}\right]yds
\]
Using Conditions A1 and $\|y\|=\|f\|$, we get an estimate
\[
\|\phi(\tau)\|<C\|f\|_{H^2(\Sigma)}+C\|f\|\tau^{2-\gamma}
\]
Thus, (\ref{denote-u}) yields (\ref{der-tau}). To prove the
estimate on the second derivative in time, we introduce
$\psi=y_{\tau \tau}$. Then,

\[
ik\psi_\tau=\left[
\frac{B}{\tau^2}+V\right]\psi+\left[V_{\tau\tau}y+2V_\tau
y_\tau-4\frac{B}{\tau^3}y_\tau+6\frac{B}{\tau^4}y\right]
\]
Multiply the second equation in (\ref{eq1}) by $4\tau^{-1}$ and
add to the equation above. We get
\[
ik\left[\psi_\tau+4\frac{\psi}{\tau}\right]=\left[\frac{B}{\tau^2}+V\right]\psi+\left[
V_{\tau \tau}y+2V_\tau y_\tau-2\frac{B}{\tau^4}
y+4\frac{V}{\tau}y_\tau+4\frac{V_\tau}{\tau}y\right]
\]

Using the suitable substitution again, one has
\[
\psi(\tau)=\tau^{-4}U(k,1,\tau)\psi(1)
\]
\[
+\tau^{-4}\int_1^\tau s^4 U(k,s,\tau)\Bigl[ V_{\tau \tau} y\Bigr.
\Bigl. +2V_\tau y_\tau-2\frac{B}{s^4}
y+4\frac{V}{s}y_\tau+4\frac{V_\tau}{s}y\Bigr]ds
\]
and
\[
\psi(1)=(ik)^{-2}(B+V)^2|_{\tau=1}f+(ik)^{-1}(V_\tau(1)-2B)f
\]
Using estimate on $y_\tau$, Conditions A1, and equation
(\ref{evolution}) to bound $s^{-4}By$, we get (\ref{der-tau-tau}).

Differentiate the first equation in (\ref{eq1}) in $k$. If
$y_k=\chi$, then
\begin{equation} \label{eq-2}
ik\chi_\tau=\left[\frac{B}{\tau^2}+V\right] \chi-iy_\tau,
\chi(1)=0
\end{equation}
and
\[
\chi(\tau)=-i\int_1^\tau U(k,s,\tau)y_\tau(s)ds
\]
Now, the estimate (\ref{der-tau}) yields (\ref{der-k})
immediately.

 Let us estimate the mixed derivative of $y$, i.e.
 $\mu=y_{k\tau}$. To do so, differentiate (\ref{eq-2}) in $\tau$:
 \[
 ik\mu_\tau=\left[\frac{B}{\tau^2}+V\right]\mu +\left[
 -iy_{\tau\tau}-2\frac{B}{\tau^3}y_k+V_\tau y_k\right]
 \]
Multiply (\ref{eq-2}) by $2\tau^{-1}$ and add to the last
equation. We then have
\[
ik\left[\mu_\tau+2\frac{\mu}{\tau}\right]=\left[
\frac{B}{\tau^2}+V\right]\mu+\left[-iy_{\tau\tau}+V_\tau
y_k+2\frac{V}{\tau}y_k-2i\frac{y_\tau}{\tau}\right]
\]
which yields
\[
\mu(\tau)=\tau^{-2}\mu(1)+\tau^{-2} \int_1^\tau s^2 U(k,s,\tau)
\left[ -iy_{\tau \tau} +V_\tau
y_k+2\frac{V}{s}y_k-2i\frac{y_\tau}{s}\right]ds
\]
Using estimates on $y_{\tau \tau}, y_k, y_\tau$ and Conditions A1,
we get (\ref{der-tau-k}).

The last estimate on $y_{k k}$ can be obtained by differentiating
(\ref{eq-2}) in $k$. If $\eta=y_{kk}$, then
\[
ik\eta_\tau=\left[\frac{B}{\tau^2}+V\right]\eta-2iy_{\tau k},
\eta(1)=0
\]
Estimate (\ref{der-tau-k}) readily yields (\ref{der-k-k}).
\end{proof}

Making some assumptions on angular derivatives of $V$ we obtain
quite a different result.

\begin{lemma}\label{wkb-scalar}
Assume that $V$ is such that
\begin{equation}\label{add-cond}
\|V\|_{L^\infty(\Sigma)}\in L^\infty [1,\infty), \|D_\theta
V(\tau, \theta)\|_{L^\infty(\Sigma)}<C \tau^{-\gamma},
\|D^2_\theta V(\tau, \theta)\|_{L^\infty(\Sigma)}<C
\tau^{1-2\gamma}
\end{equation}
Then, the following asymptotics holds
\[
[W(k,\tau)f](\theta)=\exp\left[(ik)^{-1}\int_1^\tau
V(s,\theta)ds\right]\cdot [W_{mod}(k,\tau)f](\theta)
\]
where $W_{mod}(k,\tau)f\to W_{mod}(k,\infty)f$ in $L^2(\Sigma)$ as
$\tau\to\infty$ and $f\in L^2(\Sigma)$.
\end{lemma}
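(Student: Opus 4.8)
The plan is to remove the scalar WKB phase by an explicit gauge transformation and then recognize what remains as a perturbation of the free evolution whose perturbation is integrable in $\tau$. Introduce $\Phi(\tau,\theta)=\exp\left[(ik)^{-1}\int_1^\tau V(s,\theta)\,ds\right]$. Since $V$ is real and $k$ real, $\Phi$ is unimodular, so multiplication by $\Phi$ is unitary on $L^2(\Sigma)$ and the operator $W_{mod}(k,\tau)=\Phi^{-1}W(k,\tau)$ is unitary, hence $\sup_\tau\|W_{mod}(k,\tau)\|=1$. By construction $ik\Phi_\tau=V\Phi$, so writing $y=W(k,\tau)f=\Phi g$ and substituting into (\ref{evolution}) the potential term cancels exactly, and $g=W_{mod}(k,\tau)f$ solves $ik g_\tau=\tau^{-2}\Phi^{-1}B(\Phi g)=\tau^{-2}(Bg+Qg)$ with $g(1)=f$, where $Q=\Phi^{-1}B\Phi-B$ is the first order operator $Qg=(B\Phi/\Phi)\,g-2(D_\theta\Phi/\Phi)\cdot D_\theta g$.

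Next I estimate the coefficients of the perturbation $\tau^{-2}Q$ from (\ref{add-cond}). Since $D_\theta\Phi/\Phi=(ik)^{-1}\int_1^\tau D_\theta V\,ds$ and $\|D_\theta V\|_\infty\lesssim s^{-\gamma}$, one gets $\|D_\theta\Phi/\Phi\|_\infty\lesssim\tau^{1-\gamma}$; differentiating once more and using $\|D_\theta^2 V\|_\infty\lesssim s^{1-2\gamma}$ gives $\|B\Phi/\Phi\|_\infty\lesssim\tau^{2-2\gamma}$ (the quadratic term $|D_\theta\Phi/\Phi|^2$ is of the same order). Hence the zeroth order coefficient of $\tau^{-2}Q$ is $O(\tau^{-2\gamma})$ and the first order coefficient is $O(\tau^{-1-\gamma})$, both integrable on $[1,\infty)$ because $\gamma>1/2$. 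Following the proof of Lemma~\ref{l-1}, I write $g=U_0(k,1,\tau)\tilde g$, so that $\tilde g_\tau=(ik\tau^2)^{-1}U_0(k,\tau,1)Q(\tau)g$; convergence of $g(\tau)$ in $L^2(\Sigma)$ (to $U_0(k,1,\infty)\tilde g(\infty)$, using the strong convergence of $U_0(k,1,\tau)$ recorded above) then reduces by the Cauchy criterion to integrability of $\tau^{-2}\|Q(\tau)g(\tau)\|$. The only term not already controlled by $\|g\|=\|f\|$ is the first order one, so I must bound $\|D_\theta g(\tau)\|$.

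The crux is therefore an a priori bound on the angular gradient, and the point is that it may grow: it suffices to show $\|D_\theta g(\tau)\|\lesssim\tau^{1-\gamma}$, since then the first order contribution to the integrand is $\lesssim\tau^{-2}\cdot\tau^{1-\gamma}\cdot\tau^{1-\gamma}=\tau^{-2\gamma}$, again integrable. Because $D_\theta g=\Phi^{-1}(D_\theta y-(D_\theta\Phi/\Phi)\,y)$, it is enough to prove $\|D_\theta y(\tau)\|\lesssim\tau^{1-\gamma}$ for $y=W(k,\tau)f$, which I obtain from the energy identity for $a(\tau)=\langle By,y\rangle=\|D_\theta y\|^2$. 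Differentiating and inserting (\ref{evolution}), the top order term $\tau^{-2}\langle By,By\rangle$ is real and is annihilated by the purely imaginary factor $(ik)^{-1}$ after taking the real part, and the same happens to $\int_\Sigma V|D_\theta y|^2$; what survives is only the single term carrying one angular derivative of $V$, namely $a'(\tau)=\tfrac{2}{k}\,\Im\int_\Sigma\bar y\,D_\theta y\cdot D_\theta V$, whence $|a'|\lesssim\|D_\theta V\|_\infty\|f\|\sqrt a\lesssim\tau^{-\gamma}\|f\|\sqrt a$. This yields $(\sqrt a)'\lesssim\tau^{-\gamma}$ and, after integration, $\|D_\theta y(\tau)\|\lesssim\|f\|_{H^1(\Sigma)}+\tau^{1-\gamma}$. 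This cancellation of the dangerous $H^2$-level term is the heart of the argument and the step I expect to require the most care.

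With $\|D_\theta g(\tau)\|\lesssim\tau^{1-\gamma}$ in hand, the integrand $\tau^{-2}\|Q(\tau)g(\tau)\|$ is $O(\tau^{-2\gamma})\in L^1[1,\infty)$, so $\tilde g(\tau)$ is Cauchy and $g(\tau)=W_{mod}(k,\tau)f$ converges in $L^2(\Sigma)$ to a limit $W_{mod}(k,\infty)f$. The energy computation is legitimate for $f$ in the dense subspace $H^2(\Sigma)$, where $W(k,\tau)f$ is a classical solution; since each $W_{mod}(k,\tau)$ is unitary with $\sup_\tau\|W_{mod}(k,\tau)\|=1$, the convergence extends from this dense set to all $f\in L^2(\Sigma)$ by the standard equicontinuity argument, giving the claimed factorization.
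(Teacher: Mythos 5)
Your proposal is correct and follows essentially the same route as the paper: gauge away the scalar phase $\exp[(ik)^{-1}\int_1^\tau V\,ds]$, establish the a priori bound $\|D_\theta y\|\lesssim \tau^{1-\gamma}$, and conclude that the residual source term is $O(\tau^{-2\gamma})\in L^1[1,\infty)$. The only difference is a minor one of technique in the middle step: the paper obtains the gradient bound by applying $D_\theta$ to the equation and using Duhamel with the unitarity of the perturbed evolution, whereas you use the energy identity for $\langle By,y\rangle$ (which has the small advantage of sidestepping the commutator of $D_\theta$ with $B$).
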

\begin{proof}
The proof is elementary. Let $y=W(k,\tau)f$. First, let us get an
estimate on $Dy$. We have
\[
ik[Dy]_\tau=\frac{B}{\tau^2} [Dy]+V[Dy]+y[DV], [Dy](1)=Df
\]
and
\begin{equation}\label{Dy}
\|Dy\|<C\|f\|_{H^1}+C\int_1^\tau s^{-\gamma}ds<C\tau^{1-\gamma}
\end{equation}
Denote
\[
\varphi=(ik)^{-1}\int_1^\tau V(s,\theta)ds
\]
and write $W(k,\tau)f=\exp(\varphi)\nu(\tau)$. For $\nu$, we have
equation
\[
ik \nu_\tau=\frac{B}{\tau^2}\nu+g,\, g=\frac{2D\nu \cdot
D\varphi+B\varphi+D\varphi\cdot D\varphi}{\tau^2}
\]
and $\nu(1)=f$. But $D\nu =D(y\exp(-\varphi))$. From
(\ref{add-cond}) and (\ref{Dy}), we have
\[
\|g\|<C\tau^{-2\gamma},\,\|g\|\in L^1[1,\infty)
\]
Since $U_0(k,1,\tau)\to U_0(k,1,\infty)$ strongly, we immediately
get the statement of the Lemma.
\end{proof}
The meaning of the Lemma is quite simple: if $V(\tau,\theta)$ does
not depend much on angle $\theta$, then the $\tau^{-2}B$ term in
(\ref{evolution}) can be neglected.

The next result in not so surprising in view of recent papers
\cite{Denisov2, Safronov}.
\begin{lemma}\label{oscillation}
Assume that $V(\tau)=\tau^{-2}[BQ](\tau)$ and both
$V(\tau,\theta)$ and $Q(\tau,\theta)$ satisfy the Conditions A1
with $\gamma>2/3$. Then, for any $f\in L^2(\Sigma)$, we have
\[
U(k,1,\tau)f\to U(k,1,\infty)f, \quad \tau\to\infty
\]
\end{lemma}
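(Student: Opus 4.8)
The plan is to prove norm (strong) convergence by an almost--orthogonality / energy argument that exploits the factorized structure $V=\tau^{-2}BQ$ through the self--adjointness of $B$. Since $W(k,\tau)=U(k,1,\tau)$ is unitary, an $\varepsilon/3$ argument reduces the claim to $f$ in the dense set $H^4(\Sigma)$, so I fix such an $f$ and set $y=W(k,\tau)f$. Passing to the interaction picture I put
\[
w(\tau)=U_0(k,1,\tau)^{-1}y(\tau),\qquad \|w(\tau)\|=\|f\|,\qquad w_\tau=\tfrac1{ik}\,U_0(k,1,\tau)^{-1}(Vy).
\]
Because $U_0(k,1,\tau)\to U_0(k,1,\infty)$ strongly, convergence of $W(k,\tau)f$ is equivalent to norm convergence of $w(\tau)$, and I will obtain the latter by showing that $\{w(\tau)\}$ is norm--Cauchy.

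To get the Cauchy property I fix $T_1$ and consider $\Phi(\tau)=\|w(\tau)-w(T_1)\|^2$ for $\tau\ge T_1$. Since $V$ is real, $\langle w_\tau,w\rangle=\tfrac1{ik}\int_\Sigma V|y|^2$ is purely imaginary, so $\Re\langle w_\tau,w\rangle=0$ and therefore $\Phi'(\tau)=-2\Re\langle w_\tau,w(T_1)\rangle$. Integrating gives $\sup_{T_2\ge T_1}\Phi(T_2)\le R(T_1):=2\int_{T_1}^{\infty}|\langle w_\tau,w(T_1)\rangle|\,d\tau$, so it suffices to prove $R(T_1)\to0$ as $T_1\to\infty$. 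The central computation is the bound on $\langle w_\tau,w(T_1)\rangle=\tfrac1{ik}\langle Vy,\zeta\rangle$, where $\zeta:=U_0(k,1,\tau)w(T_1)$ obeys $\|\zeta\|=\|f\|$ and, because $B$ commutes with $U_0$, also $\|B\zeta\|=\|By(T_1)\|$ and $\|D\zeta\|^2=\langle w(T_1),Bw(T_1)\rangle\le\|f\|\,\|By(T_1)\|$. Here I would insert $V=\tau^{-2}BQ$ and move $B$ off $Q$ by self--adjointness and the product rule $B(y\bar\zeta)=(By)\bar\zeta+y\,\overline{B\zeta}-2\,Dy\cdot D\bar\zeta$:
\[
\langle Vy,\zeta\rangle=\tau^{-2}\!\int_\Sigma Q\,B(y\bar\zeta)=\tau^{-2}\!\int_\Sigma Q\big[(By)\bar\zeta+y\,\overline{B\zeta}-2\,Dy\cdot D\bar\zeta\big].
\]
The decisive feature is that the coefficient in every term is $Q$ itself (of size $\tau^{-\gamma}$) and not any angular derivative of $Q$; this is what allows me to dispense with any angular regularity of $V$ or $Q$.

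To estimate the three pieces I would use Lemma~\ref{main-lemma}, which gives $\|y_\tau\|<C\tau^{-\gamma}$, and then the equation in the form $\tau^{-2}By=iky_\tau-Vy$ to deduce $\|\tau^{-2}By\|<C\tau^{-\gamma}$, hence $\|By\|<C\tau^{2-\gamma}$ and $\|Dy\|^2=\langle y,By\rangle<C\tau^{2-\gamma}$. With $\|B\zeta\|<C\,T_1^{2-\gamma}$ and $\|D\zeta\|<C\,T_1^{1-\gamma/2}$ frozen at $T_1$, the three integrands are controlled by $C\tau^{-2\gamma}$, $C\,T_1^{2-\gamma}\tau^{-2-\gamma}$, and $C\,T_1^{1-\gamma/2}\tau^{-1-3\gamma/2}$ respectively. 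Integrating each over $[T_1,\infty)$ produces a contribution of order $T_1^{1-2\gamma}$, so $R(T_1)\le C(f,k)\,T_1^{1-2\gamma}\to0$. This closes the Cauchy estimate; note that these bounds in fact converge for every $\gamma>1/2$, a fortiori for the assumed $\gamma>2/3$. Finally, norm convergence of $w$ together with strong convergence of $U_0(k,1,\tau)$ yields $W(k,\tau)f=U_0(k,1,\tau)w(\tau)\to U(k,1,\infty)f$, and the density/$\varepsilon/3$ step extends this to all $f\in L^2(\Sigma)$.

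The step I expect to be the main obstacle is the first--order term $\tau^{-2}\int_\Sigma Q\,Dy\cdot D\bar\zeta$. One must balance the growth $\|Dy\|,\|D\zeta\|\sim\tau^{1-\gamma/2}$, which is forced by $\|By\|\sim\tau^{2-\gamma}$, against the gain $\tau^{-2}\|Q\|_\infty$, all while tracking the $T_1$--growth of the frozen vector $w(T_1)$. A direct norm bound on this term would require $DQ\in L^\infty(\Sigma)$, which the hypotheses do not supply; the whole point of throwing $B$ onto the smooth product $y\bar\zeta$ rather than onto $Q$ is to make this term controllable by $\|Q\|_\infty$ alone. Verifying that the three resulting integrals combine to the single decaying scale $T_1^{1-2\gamma}$ is the technical heart of the argument.
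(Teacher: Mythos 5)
Your proof is correct, but it takes a genuinely different route from the paper's. The paper makes the substitution $z=(1+Q)y$, which absorbs the $\tau^{-2}BQ$ term into the free part; the price is the error term $g=-Q\tau^{-2}By-2\tau^{-2}DQ\cdot Dy$, whose cross term is only of size $\tau^{-3\gamma/2}$ (after interpolating $\|\tau^{-1}DQ\|\le C\tau^{-\gamma}$ and $\|\tau^{-1}Dy\|\le C\tau^{-\gamma/2}$), and this is what forces $\gamma>2/3$. You instead never modify the solution: you pass to the interaction picture and prove a Cauchy estimate on the single scalar pairing $\langle Vy,\zeta\rangle$, throwing $B$ off $Q$ onto the product $y\bar\zeta$ by self-adjointness. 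This buys two things. First, each of your three terms is a pairing estimated by H\"older with $\|Q\|_\infty$ in $L^\infty$ and the derivatives in $L^2$, so you avoid having to control the $L^2$ norm of the product $DQ\cdot Dy$ (a step the paper's argument glosses over, since the product of two $L^2$ gradients need not be in $L^2$). Second, all three contributions integrate to $O(T_1^{1-2\gamma})$, so your argument closes for every $\gamma>1/2$ — i.e., it proves exactly the strengthening the paper conjectures in the remark immediately following the lemma (``we believe that the condition $\gamma>2/3$ can be replaced by $\gamma>1/2$''). The reductions you use ($f\in H^4$ by density and unitarity, $\|\tau^{-2}By\|\le C\tau^{-\gamma}$ from the equation plus Lemma~\ref{main-lemma}, $\|D\zeta\|$ frozen at $T_1$ because $B$ commutes with $U_0$) are all sound and at the same level of rigor as the paper's own manipulations.
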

\begin{proof}
The equation (\ref{evolution}) can be rewritten as follows
\[
iky_\tau=\frac{B}{\tau^2} \left[y+Qy\right]+g,\quad
g=-Q\frac{B}{\tau^2}y-2\frac{DQ\cdot Dy}{\tau^2}
\]
From Lemma \ref{main-lemma}, we get
$\|\tau^{-2}By\|<C\tau^{-\gamma}$. By interpolation with
$\|y\|=\|f\|$, we have $\|\tau^{-1} Dy\|<C\tau^{-\gamma/2}$. For
$Q$, we have $\|Q\|<C\tau^{-\gamma},
\|\tau^{-2}BQ\|<C\tau^{-\gamma}$. Interpolating again,
$\|\tau^{-1}DQ\|<C\tau^{-\gamma}$. Consequently,
$\|g\|<C\tau^{-3\gamma/2}$ and $\|g\|\in L^1[1,\infty)$. Let us
consider $z=y+Qy$. For $z$,
\[
ikz_\tau=\frac{B}{\tau^2}z +g_1, g_1=g+ik(Qy_\tau+Q_\tau y),
\|g_1\|\in L^1[1,\infty)
\]
Therefore, $z(\tau)\to z(\infty)$. Since
$\|Q(\tau,\theta)\|_\infty\to 0$, the limit of $y$ exists as well.
\end{proof}
We believe that the condition $\gamma>2/3$ can be replaced by
$\gamma>1/2$.

The following example is very instructive.

 {\bfseries Example.} Introduce the spherical coordinates on $\Sigma$ using
 angles $\phi\in [0,2\pi), \psi\in [0,\pi)$. Consider, say, diadic decomposition of $[1,\infty)$.
  For $\tau\in [2^n,
 2^{n+1}]$, we let
 $M(\tau,\phi,\psi)=v(\tau)\sin(2^n\phi)\chi(\psi)$,
 where $v(\tau)$ is such that $|v(\tau)|<C\tau^{-\gamma},
 |v'(\tau)|< C\tau^{-1-\gamma}, |v''(\tau)|<C\tau^{-1-2\gamma}$. We also assume $v(\tau)=0$ for $\tau\in [2^n-1,
 2^n+1]$ and any $n$. Function $\chi(\psi)=0$ for
 $\psi\in [0,\delta]\cup [\pi-\delta, \pi]$ and is infinitely smooth.
For $V=M$ and $\gamma>1/2$, conditions of Lemma \ref{main-lemma}
are satisfied.

Letting $Q=M$ and $V(\tau,\phi,\psi)=\tau^{-2}BQ(\tau,\phi,\psi)$,
we satisfy conditions of the Lemma \ref{oscillation} as long as
$\gamma>2/3$. Therefore, the scalar modification $\exp(\varphi)$
used in Lemma~\ref{wkb-scalar} is not correct if applied to
situation considered in Lemma \ref{oscillation} (i.e. the
statement of Lemma~\ref{wkb-scalar} is wrong under conditions of
Lemma \ref{oscillation}). That can be easily proved by
contradiction if $v(\tau)$ is chosen properly.

It is important to emphasize that Lemmas \ref{l-1} and
\ref{main-lemma} never used the special properties of
Laplace-Beltrami operator. The other two Lemmas did use the fact
that Laplace-Beltrami operator is the second-order differential
operator.

\section{Asymptotics of the Schr\"odinger evolution}

The main goal of this section is to prove an analog of Theorem
\ref{long-range-theorem} provided that only the radial derivatives
are short-range. We need to introduce some notations first. Recall
the definition of $W(k,\tau)$, formula (\ref{doublev}). For any
$t> 0$, consider the following operator
\[
[\cal{E}(t)f](x)=(2it)^{-3/2}\exp\left[i|x|^2/(4t)\right]\cdot
W(|x|/t, |x|)\left[\hat f(|x|/(2t)\theta)\right] (\hat{x})
\]
acting on $f\in \Omega$. The space $\Omega$ consists of functions
$f$ such that $\hat{f}\in C^\infty(\mathbb{R}^3)$, $\hat{f}$ is
compactly supported and vanishes in a neighborhood of the origin
(i.e. $\hat{f}\in C_0^\infty (\mathbb{R}^3\backslash \{0\})$. For
clarity, we explain the meaning of the third factor
\[W(|x|/t, |x|)\left[\hat f(|x|/(2t)\theta)\right] (\hat{x})\]
in more details.
 To define this function on the sphere of radius $|x|$, we
consider $\hat f(|x|/(2t)\theta)$ as the function on the unit
sphere (i.e., $\theta\in \Sigma$). Since $\hat{f}$ vanishes near
the origin, we do not have any problems with definition  at
$|x|=0$. Then, we act on this function with the operator
$W(k,\tau)$ where $k=|x|/t$ and $\tau=|x|$. The resulting function
is taken at the point $\hat{x}\in \Sigma$. This third factor is
the WKB correction in the time evolution of $\exp(itH)$ as
$t\to\infty$. We will need to know some properties of
$\cal{E}(t)$.

\begin{lemma}
For any $t>0$, $\cal{E}(t)$ can be extended to a unitary linear
operator on $L^2(\mathbb{R}^3)$.
\end{lemma}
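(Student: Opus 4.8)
The plan is to prove the two halves of unitarity separately on the dense subspace $\Omega$: first that $\cal{E}(t)$ is an isometry, and then that it is onto, after which the unitary extension to $L^2(\mathbb{R}^3)$ is automatic. The structural idea is that in polar coordinates $\cal{E}(t)$ factors as the composition of the free-evolution asymptotic map with a \emph{fiberwise} rotation acting by $W(|x|/t,|x|)$ on each sphere $\{|x|=r\}$. Since each $W(k,\tau)$ is unitary on $L^2(\Sigma)$ (as noted right after the definition of $U$, where $U^{*}(k,\tau_0,\tau)=U(k,\tau,\tau_0)$), and since the free map is unitary by the usual Fourier/dilation computation, the composition will be unitary.

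For the isometry, I would write $r=|x|$, $\theta_0=\hat{x}$, and for fixed $r$ let $g_r(\theta)=\hat{f}\big((r/(2t))\theta\big)$ denote the restriction of $\hat f$ to the sphere of radius $r/(2t)$, viewed as an element of $L^2(\Sigma)$. Then
\[
[\cal{E}(t)f](r\theta_0)=(2it)^{-3/2}e^{ir^2/(4t)}\big(W(r/t,r)g_r\big)(\theta_0).
\]
Computing $\|\cal{E}(t)f\|^2$ in polar coordinates, the factor $|(2it)^{-3/2}|^2=(2t)^{-3}$ and the unimodular phase $e^{ir^2/(4t)}$ pull out, and the inner angular integral is $\|W(r/t,r)g_r\|_{L^2(\Sigma)}^2$, which equals $\|g_r\|_{L^2(\Sigma)}^2$ by unitarity of $W(r/t,r)$. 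After the substitution $\rho=r/(2t)$ the Jacobian $(2t)^3$ cancels the prefactor $(2t)^{-3}$, leaving $\int_0^\infty\rho^2\int_\Sigma|\hat f(\rho\theta)|^2\,d\theta\,d\rho=\|\hat f\|^2=\|f\|^2$ by unitarity of $\cal{F}$. Hence $\cal{E}(t)$ is an isometry on $\Omega$, and since $\Omega$ is dense in $L^2(\mathbb{R}^3)$ it extends to an isometry on the whole space.

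For surjectivity I would realize the angular action as a decomposable operator on the direct integral $L^2(\mathbb{R}^3)\cong\int_{(0,\infty)}^{\oplus}L^2(\Sigma)\,r^2\,dr$: define $\cal{W}$ by $(\cal{W}h)(r\theta_0)=\big(W(r/t,r)\,h(r\,\cdot)\big)(\theta_0)$. The Duhamel expansion defining $U$ is jointly strongly continuous in $(k,\tau)$ for $k,\tau>0$, so $r\mapsto W(r/t,r)$ is strongly measurable and each fiber is unitary; thus $\cal{W}$ is unitary, with inverse given fiberwise by $W(r/t,r)^{-1}=U(r/t,r,1)$. Denoting by $\cal{E}_{\mathrm{fr}}(t)$ the free asymptotic map $f\mapsto(2it)^{-3/2}e^{ir^2/(4t)}g_r(\theta_0)$ (unitary by the same change of variables, now without $W$), and noting that the prefactor $(2it)^{-3/2}e^{ir^2/(4t)}$ depends only on $r$ and so commutes with the angular action of $\cal{W}$, one gets $\cal{E}(t)=\cal{W}\,\cal{E}_{\mathrm{fr}}(t)$, a composition of two unitaries, hence unitary.

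The only point needing care is the behavior of $W(k,\tau)$ as $k=r/t\to0$, where the $1/(ik)$ coefficients make $U(k,1,\tau)$ highly oscillatory. This is, however, a mild obstacle: each $W(k,\tau)$ is still unitary (norm one) for every $k\neq0$, the single radius $r=0$ at which $k=0$ is a null set and is irrelevant to the direct-integral construction, and on $\Omega$ one has $g_r=0$ for all small $r$ since $\hat f$ vanishes near the origin, so the delicate regime is never probed on the dense domain. Everything else is the routine polar-coordinate bookkeeping recorded above.
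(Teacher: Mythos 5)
Your proof is correct, and the isometry half is essentially identical to the paper's (the same polar-coordinate computation, pulling out the unimodular prefactor, using unitarity of $W(r/t,r)$ on each sphere, and changing variables $\rho=r/(2t)$). Where you genuinely diverge is in the surjectivity half. The paper computes the adjoint explicitly,
\[
[\cal{E}^*(t)f](x)=e^{i\frac{3\pi}{4}}(2t)^{3/2}\cdot
\cal{F}^{-1}\left\{\exp\left[-it|w|^2\right]\cdot
W^*(2|w|,2|w|t)\left[f(2t|w|\sigma)\right](\hat w)\right\},
\]
and then asserts that $\ker\cal{E}^*(t)=\{0\}$, so that the range of the isometry $\cal{E}(t)$ is all of $L^2(\mathbb{R}^3)$. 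You instead factor $\cal{E}(t)=\cal{W}\,\cal{E}_{\mathrm{fr}}(t)$, where $\cal{E}_{\mathrm{fr}}(t)$ is the standard free asymptotic map (unitary by the Fourier/dilation computation) and $\cal{W}$ is the decomposable operator on $\int^{\oplus}_{(0,\infty)}L^2(\Sigma)\,r^2\,dr$ with unitary fibers $W(r/t,r)$; a decomposable operator with (strongly measurable) unitary fibers is unitary, with inverse given fiberwise by $U(r/t,r,1)$. The two routes carry the same mathematical content --- the paper's adjoint formula is exactly the fiberwise application of $W^*$ composed with the inverse of the free map --- but your factorization makes the invertibility transparent rather than leaving ``the kernel of $\cal{E}^*(t)$ is trivial'' as an assertion, at the modest price of having to note strong measurability of $r\mapsto W(r/t,r)$ (which you do, via continuity of the Duhamel expansion). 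Your closing remark about the regime $k\to 0$ is a sensible precaution, though as you say it is vacuous here since $t>0$ is fixed and $r=0$ is a null set.
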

\begin{proof}
Fix $t>0$. Linearity of $\cal{E}(t)$ on $\Omega$ is obvious. Let
us show that it is an isometry on $\Omega$. We have
\[
\|\cal{E}(t)\|_2^2=\frac{1}{(2t)^3}\int_0^\infty
\rho^2\int_{\theta\in\Sigma} \left|W(\rho/t,\rho)\left[\hat
f(\rho/(2t)\sigma)\right](\theta)\right|^2d\theta d\rho
\]
Since $W(k,\tau)$ is unitary for any $k$ and $\tau$,
\[
\|\cal{E}(t)\|_2^2=\frac{1}{(2t)^3}\int_0^\infty
\rho^2\int_{\sigma\in\Sigma} \left|\hat
f(\rho/(2t)\sigma)\right|^2d\sigma
d\rho=\frac{1}{(2t)^3}\int_{\mathbb{R}^3} |\hat{f}(x/(2t))|^2dx
\]
\[
=\int_{\mathbb{R}^3} |\hat{f}(x)|^2dx=\|f\|^2
\]
Since $\Omega$ is dense in $L^2(\mathbb{R}^3)$, $\cal{E}(t)$ can
be extended to $L^2(\mathbb{R}^3)$ as an isometry. The adjoint
operator can be easily computed
\[
[\cal{E}^*(t)f](x)=e^{i\frac{3\pi}{4}}(2t)^{3/2}\cdot
\cal{F}^{-1}\left\{\exp\left[-it|w|^2\right]\cdot
W^*(2|w|,2|w|t)\left[f(2t|w|\sigma)\right](\hat w)\right\}
\]
The kernel of $\cal{E}^*(t)$ is trivial. Therefore, the range of
$\cal{E}(t)$ is $L^2(\mathbb{R}^3)$ and $\cal{E}(t)$ is unitary.
\end{proof}
For $t<0$, $\cal{E}(t)$ can be defined in the same way. Now we are
prepared for the main result of this paper.
\begin{theorem}\label{main-theorem}
Assume that $V$ satisfies Conditions A. Then, for any $f\in
L^2(\mathbb{R}^3)$, the following limits exist
\[
\cal{W}_{\pm}f=\lim_{t\to \pm \infty} \exp(iHt)\cal{E}(t)f
\]
We will call these operators $\cal{W}_{\pm}$ the modified wave
operators.
\end{theorem}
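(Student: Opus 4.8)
The plan is to use Cook's method. Since $\exp(iHt)\cal{E}(t)$ is a product of two unitaries, the family $\{\exp(iHt)\cal{E}(t)\}_t$ is uniformly bounded, so it suffices to establish the existence of $\lim_{t\to\pm\infty}\exp(iHt)\cal{E}(t)f$ for $f$ in the dense set $\Omega$ and then pass to all of $L^2(\mathbb{R}^3)$ by the standard $\varepsilon/3$ argument. For $f\in\Omega$ the function $\cal{E}(t)f$ is smooth and compactly supported in $x$ (it vanishes unless $|x|/(2t)$ lies in the annulus carrying $\hat f$), hence lies in $\cal{D}(H)$, and $t\mapsto\exp(iHt)\cal{E}(t)f$ is differentiable with
\[
\frac{d}{dt}\bigl[\exp(iHt)\cal{E}(t)f\bigr]=\exp(iHt)\,R(t),\qquad R(t):=iH\cal{E}(t)f+\partial_t\cal{E}(t)f .
\]
Because $\exp(iHt)$ is unitary, the limit exists as soon as I show $\int^{\pm\infty}\|R(t)\|\,dt<\infty$.

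The core of the argument is an exact computation of $R(t)$ exhibiting $\cal{E}(t)$ as an approximate solution of the Schr\"odinger equation. Write $r=|x|$, $\theta=\hat{x}$, $S=|x|^2/(4t)$, and $\cal{E}(t)f=(2it)^{-3/2}e^{iS}\Phi$ with $\Phi(r,\theta,t)=[W(r/t,r)\hat f(\tfrac{r}{2t}\,\cdot)](\theta)$. Using $-\Delta=-\partial_r^2-\tfrac{2}{r}\partial_r+\tfrac{B}{r^2}$ and differentiating $e^{iS}$, the terms carrying the factor $|x|^2/(4t^2)$ cancel against $\partial_t e^{iS}$, while the terms with factor $3/(2t)$ cancel against $\partial_t[(2it)^{-3/2}]$. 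What survives involves the transport operator $\partial_t+\tfrac{r}{t}\partial_r$ applied to $\Phi$ together with the angular/potential part. Two facts now combine. First, the classical trajectory $\tfrac{r}{2t}$ is annihilated by $\partial_t+\tfrac{r}{t}\partial_r$, so this operator never reaches the argument $\hat f(\tfrac{r}{2t}\cdot)$ of $W$, and the cross terms in $\partial_k W$ also cancel. Second, by the very definition of $W$ through the evolution equation (\ref{evolution}), $\tfrac{r}{t}\,\partial_\tau W=\tfrac{1}{i}\bigl[\tfrac{B}{r^2}+V\bigr]W$, which exactly cancels the $\tfrac{iB}{r^2}\Phi+iV\Phi$ contribution. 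After these cancellations one is left with the clean identity
\[
R(t)=-i\,(2it)^{-3/2}e^{iS}\Bigl(\Phi_{rr}+\tfrac{2}{r}\Phi_r\Bigr),
\]
so the only error is the radial part of $-\Delta$ acting on the slowly varying amplitude $\Phi$.

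It remains to estimate $\Phi_r$ and $\Phi_{rr}$. Expanding by the chain rule through the three dependencies of $\Phi$ on $r$ (the slot $\tau=r$, the slot $k=r/t$, and the argument $\hat f(\tfrac{r}{2t}\cdot)$) produces a finite sum of terms, each a $W$-derivative governed by Lemma \ref{main-lemma} applied to an $H^4(\Sigma)$ function. Since $\hat f\in C_0^\infty(\mathbb{R}^3\setminus\{0\})$ is supported in an annulus $a\le|\xi|\le b$, the function $\Phi(r,\cdot,t)$ vanishes unless $\tfrac{r}{2t}\in[a,b]$; thus for fixed large $t$ everything is supported in $r\in[2a|t|,2b|t|]$, where $r\sim|t|$ and $k=r/t$ stays in a fixed compact subset of $\mathbb{R}^+$, matching the hypothesis $k\in I$ of Lemma \ref{main-lemma}. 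On this set the estimates (\ref{der-tau})--(\ref{der-k-k}) give $\|\Phi_r\|_{L^2(\Sigma)}\lesssim r^{-\gamma}$ and, with $t\sim r$, $\|\Phi_{rr}\|_{L^2(\Sigma)}\lesssim r^{-2\gamma}$ (the dominant contributions $W_{\tau\tau}$, $t^{-1}W_{\tau k}$, $t^{-2}W_{kk}$ all scale like $r^{-2\gamma}$, while the terms carrying derivatives of the argument are smaller). Hence $\|\Phi_{rr}+\tfrac{2}{r}\Phi_r\|_{L^2(\Sigma)}\lesssim r^{-2\gamma}$, and
\[
\|R(t)\|^2=(2|t|)^{-3}\!\int_{2a|t|}^{2b|t|}\! r^2\Bigl\|\Phi_{rr}+\tfrac{2}{r}\Phi_r\Bigr\|_{L^2(\Sigma)}^2 dr\;\lesssim\; |t|^{-3}\!\int_{2a|t|}^{2b|t|}\! r^{2-4\gamma}dr\;\lesssim\; |t|^{-4\gamma},
\]
so $\|R(t)\|\lesssim|t|^{-2\gamma}$, which is integrable at infinity precisely because $\gamma>1/2$. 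This yields the claim for $f\in\Omega$ and hence for all $f$; the case $t\to-\infty$ is identical.

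The main obstacle is the routine but essential bookkeeping: one must verify the exact cancellations in $R(t)$ (the point where the evolution equation (\ref{evolution}) is designed to do its work) and then carry every chain-rule term through Lemma \ref{main-lemma}, confirming that all surviving contributions decay at least like $r^{-2\gamma}$, after which the support localization converts this spatial decay into the temporal rate $|t|^{-2\gamma}$ whose integrability is exactly the condition $\gamma>1/2$. A secondary technical point is to check that $\cal{E}(t)f\in\cal{D}(H)$ with enough angular regularity for $f\in\Omega$, which holds because $\hat f(\tfrac{r}{2t}\cdot)\in C^\infty(\Sigma)\subset H^4(\Sigma)$ and $W(k,\tau)$ preserves the relevant Sobolev regularity.
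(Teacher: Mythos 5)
Your proposal is correct and follows essentially the same route as the paper: Cook's method on the dense set $\Omega$, the factorization $\cal{E}(t)f=(2it)^{-3/2}e^{i|x|^2/(4t)}\,\Phi$ with the free phase annihilated by $i\partial_t+\Delta$, cancellation of the $W_k$ and $\hat f_r$ cross terms along the trajectory $r/(2t)$, elimination of the $B/r^2$ and $V$ terms via the defining evolution equation for $W$, and the residual term $\Phi_{rr}+\tfrac{2}{r}\Phi_r$ estimated through Lemma \ref{main-lemma} on the annulus $r\in[2a|t|,2b|t|]$ to get $\|R(t)\|\lesssim |t|^{-2\gamma}\in L^1$. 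The identified cancellations and the final bound coincide with those in the paper's proof.
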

\begin{proof}
We will consider the case $t\to +\infty$ only. The other situation
is analogous. Since both operators $\exp(iHt)$ and $\cal{E}(t)$
are unitary, the limit $\cal{W}^{+}$, if exists, is an isometry.
Also, it is sufficient to prove existence of $\cal{W}^{+}f$ for
$f\in \Omega$ because $\overline{\Omega}=L^2(\mathbb{R}^3)$. Fix
$f\in \Omega$. We will use Cook's method to show existence of the
limit. To do that, compute a derivative
\[
\frac{d}{d t}\left[ \exp(itH)\cal{E}(t)f\right]=\exp(itH)\left[
iH\cal{E}(t)f+\frac{d}{d t}\cal{E}(t)f\right]
\]
To prove existence of the limit, it is sufficient to show that
this derivative has $L^2(\Sigma)$--norm in $L^1[1,\infty)$. This
is equivalent to
\[
\left\|iH\cal{E}(t)f+\frac{d}{d t}\cal{E}(t)f\right\|\in
L^1[1,\infty)
\]
Checking the last claim is a straightforward calculation that uses
properties of evolution $W(k,\tau)$ studied in the previous
section. For simplicity, write $\left[\cal{E}(t)
f\right](x)=\kappa(t,x) \cdot Y(t,x)$, where
\[
\kappa(t,x)=(2it)^{-3/2}\exp\left[i|x|^2/(4t)\right],\quad
Y(t,x)=W(|x|/t, |x|)\left[\hat f(|x|/(2t)\theta)\right] (\hat{x})
\]
Since
\[
\left(i\frac{\partial}{\partial t}+\Delta\right)\kappa(x,t)=0, t>0
\]
we are left with estimating
\[
\Delta (\kappa Y)+i\frac{d}{d t}(\kappa Y)-V\kappa Y=2\nabla
\kappa \cdot \nabla Y+\kappa \Delta Y+i\kappa \frac{d}{dt}
Y-V\kappa Y
\]
Let us compute each derivative and show the cancelation of the
main terms. We assumed that $f\in \Omega$ and then
$\hat{f}(\omega)$ has support within the spherical layer
$0<\delta_1<|\omega|<\delta_2$. Therefore,  $Y(x,t)=0$ for
$|x|<2\delta_1 t$ and for $|x|>2\delta_2 t$.
\begin{equation}\label{ddr}
2\nabla \kappa \cdot \nabla Y=\kappa
\frac{i|x|}{t}\frac{d}{dr}Y=\kappa \frac{i|x|}{t}\left\{
t^{-1}W_k(|x|/t, |x|)\left[\hat f(|x|/(2t)\theta)\right] (\hat{x})
\right.
\end{equation}
\[
\left. +W_\tau(|x|/t, |x|)\left[\hat f(|x|/(2t)\theta)\right]
(\hat{x})+(2t)^{-1}W(|x|/t, |x|)\left[\hat
f_r(|x|/(2t)\theta)\right] (\hat{x}) \right\},
\]
\[
i\kappa \frac{d}{dt}Y=-i\kappa \frac{|x|}{t^2}W_k(|x|/t,
|x|)\left[\hat f(|x|/(2t)\theta)\right] (\hat{x})
\]
\[
-i\kappa \frac{|x|}{2t^2}W(|x|/t, |x|)\left[\hat
f_r(|x|/(2t)\theta)\right] (\hat{x}),
\]
We can write $\Delta$ in the following way
\[
\Delta=\frac{1}{r^2}\partial_r(r^2
\partial_r)-\frac{B}{r^2}=\partial^2_r+\frac{2}{r}\partial_r-\frac{B}{r^2}
\]
The first derivative $d/dr$ was calculated in (\ref{ddr}). The
second derivative gives
\[
\frac{d^2Y}{dr^2}=t^{-2}W_{kk}(|x|/t, |x|)\left[\hat
f(|x|/(2t)\theta)\right] (\hat{x})+2t^{-1}W_{k \tau}(|x|/t,
|x|)\left[\hat f(|x|/(2t)\theta)\right] (\hat{x})
\]
\[
+W_{\tau\tau}(|x|/t, |x|)\left[\hat f(|x|/(2t)\theta)\right]
(\hat{x})+t^{-2}W_k(|x|/t, |x|)\left[\hat
f_r(|x|/(2t)\theta)\right] (\hat{x})
\]
\[
+t^{-1}W_{\tau}(|x|/t, |x|)\left[\hat f_r(|x|/(2t)\theta)\right]
(\hat{x})+(2t)^{-2}W(|x|/t, |x|)\left[\hat
f_{rr}(|x|/(2t)\theta)\right] (\hat{x})
\]
Then, we can do some obvious cancelations and combine these terms
as follows
\begin{eqnarray}
2\nabla \kappa \cdot \nabla Y+\kappa \Delta Y+i\kappa \frac{d}{dt}
Y-V\kappa Y=
\end{eqnarray}
\[
=\kappa\left\{i\frac{|x|}{t} W_{\tau}(|x|/t,
|x|)-\frac{B}{|x|^2}-V\right\}\left[\hat
f_r(|x|/(2t)\theta)\right] (\hat{x})+R(x,t)
\]
By the definition of evolution equation itself, the first term is
identically zero. Now, we just need to show that
$\|R(\cdot,t)\|\in L^1[1,\infty)$. But $R(x,t)=\kappa
(d^2/dr^2+2r^{-1}d/dr)Y(x,t)$ and we have to use formulas above
and estimates on the derivatives from Lemma \ref{main-lemma}. It
is more convenient to write these bounds in spherical coordinates:
\[
\|R\|^2<Ct^{-3}\int\limits_{2\delta_1 t}^{2\delta_2 t}\rho^2
\|\left[d^2/d\rho^2+2\rho^{-1}d/d\rho\right]Y(\rho,\theta,t)\|^2_{L^2(\theta\in
\Sigma)}d\rho<Ct^{-4\gamma}
\]
where the last estimate follows directly from Lemma
\ref{main-lemma} because $\hat{f}(\omega)$ is infinitely smooth.
The constant $C$ does depend on $\delta_{1(2)}$. Since
$\gamma>1/2$, we have $\|R\|\in L^1[1,\infty)$ and that finishes
the proof.
\end{proof}

 Notice that this
Theorem is strictly stronger than Theorem
\ref{long-range-theorem}.  This is due to Lemmas \ref{wkb-scalar},
\ref{oscillation}, and the Example considered at the end of
previous section. It says, essentially, that the right WKB
correction is provided by solution to the evolution equation
(\ref{evolution}). We do not see how any further asymptotical
expansion of $W(k,\tau)$ can be done (i.e. $W(k,\tau)$ seems to be
``thing in itself", e.g. like the function $e^{ix}$). That,
apparently, is the key difficulty in this and many related
problems.

An important example of $V$ from Theorem \ref{main-theorem} is
provided by the following construction which is similar to Example
considered in the previous section.  Again, take a diadic
decomposition of $[1,\infty)$ and define $V$ in spherical
coordinates as follows: $V(\rho,\theta)=v(\rho)\cdot h_n(\theta)$
for $\rho\in [2^n, 2^{n+1}]$, where $v(\rho)$ is borrowed from
Example and $h_n(\theta)$-- smooth function on $\Sigma$, different
for each $n$.

Now, let us study operators $\cal{W}_{\pm}$.

\begin{lemma}For any bounded measurable function $\Phi(\lambda)$, we have an
intertwining property
\begin{equation}\label{intertwining}
\Phi(H)\cal{W}_{\pm}=\cal{W}_{\pm} \Phi(H_0)
\end{equation}
The subspaces $\cal{H}_{\pm}={\rm Ran}\, \cal{W}_{\pm}$ reduce $H$
and $H|_{\cal{H}_{\pm}}$ are unitarily equivalent to $H_0$.
\end{lemma}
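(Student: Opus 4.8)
The plan is to establish the intertwining relation (\ref{intertwining}) first only for the unitary groups, i.e. to prove that $e^{i\sigma H}\cal{W}_{\pm}=\cal{W}_{\pm}e^{i\sigma H_0}$ for every $\sigma\in\mathbb{R}$, and then to upgrade to arbitrary bounded measurable $\Phi$ by functional calculus. Starting from $\cal{W}_{\pm}f=\lim_{t\to\pm\infty}e^{itH}\cal{E}(t)f$ and using strong continuity of the group, I would write $e^{i\sigma H}\cal{W}_{\pm}f=\lim_{t\to\pm\infty}e^{i(t+\sigma)H}\cal{E}(t)f$ and substitute $s=t+\sigma$ to get $\lim_{s\to\pm\infty}e^{isH}\cal{E}(s-\sigma)f$. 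Comparing with $\cal{W}_{\pm}e^{i\sigma H_0}f=\lim_{s\to\pm\infty}e^{isH}\cal{E}(s)e^{i\sigma H_0}f$ and using that $e^{isH}$ is isometric, the group relation reduces to the single asymptotic identity
\[
\lim_{t\to\pm\infty}\bigl\|\cal{E}(t-\sigma)f-\cal{E}(t)e^{i\sigma H_0}f\bigr\|=0
\]
for $f$ in the dense set $\Omega$; note here that $e^{i\sigma H_0}f\in\Omega$ as well, since its Fourier transform is $e^{i\sigma|\cdot|^2}\hat f$, with the same compact support away from the origin.

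This identity is the heart of the matter and the place where Lemma \ref{main-lemma} enters; I expect it to be the main obstacle, precisely because $\cal{E}$ obeys no exact group law, so the cancellation that is automatic for the genuine free propagator is here only asymptotic. The operator $e^{i\sigma H_0}$ merely multiplies $\hat f(|x|/(2t)\theta)$ by the angle-independent scalar $\exp[i\sigma|x|^2/(4t^2)]$, which therefore passes through $W$. At each radius $\rho=|x|\sim t$ one then compares: the two quadratic phases $\rho^2/(4t)+\sigma\rho^2/(4t^2)$ and $\rho^2/(4(t-\sigma))$; the two spectral parameters $\rho/t$ and $\rho/(t-\sigma)$ of $W$; and the two arguments $\rho/(2t)$ and $\rho/(2(t-\sigma))$ of $\hat f$. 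A Taylor expansion of $\rho^2/(4(t-\sigma))$ shows the phases agree up to $O(\sigma^2\rho^2/t^3)=O(1/t)$ on the support $\rho\sim t$; the $\hat f$-arguments differ by $O(\sigma/t)$, controlled by smoothness of $\hat f$ and unitarity of $W$; and the spectral parameters differ by $O(\sigma/t)$, controlled by the bound $\|W_k(k,\tau)g\|\le C\tau^{1-\gamma}\|g\|_{H^4(\Sigma)}$ from Lemma \ref{main-lemma}, which yields a discrepancy $O(\sigma\,t^{-\gamma})$ in $L^2(\Sigma)$. Inserting these into the radial integral with the weight $(2t)^{-3}\rho^2\,d\rho$ over $\rho\in[2\delta_1 t,2\delta_2 t]$ gives an $L^2(\mathbb{R}^3)$ bound of order $t^{-\gamma}+t^{-1}\to0$.

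With the group relation established on $\Omega$, it extends to all of $L^2(\mathbb{R}^3)$ since both sides are bounded operators. To reach an arbitrary bounded measurable $\Phi$, I would integrate against $L^1$ weights: for $g\in L^1(\mathbb{R})$, integrating $e^{i\sigma H}\cal{W}_{\pm}=\cal{W}_{\pm}e^{i\sigma H_0}$ against $g(\sigma)\,d\sigma$ gives $\Phi(H)\cal{W}_{\pm}=\cal{W}_{\pm}\Phi(H_0)$ for $\Phi(\lambda)=\int g(\sigma)e^{i\sigma\lambda}\,d\sigma$. Such $\Phi$ form a dense subalgebra of $C_0(\mathbb{R})$, and a standard bounded-convergence argument—using that $\Phi_n\to\Phi$ boundedly and pointwise implies $\Phi_n(A)\to\Phi(A)$ strongly for any self-adjoint $A$—extends (\ref{intertwining}) to all bounded measurable $\Phi$.

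Finally, recall from the proof of Theorem \ref{main-theorem} that $\cal{W}_{\pm}$ is an isometry, so $P_{\pm}=\cal{W}_{\pm}\cal{W}_{\pm}^*$ is the orthogonal projection onto $\cal{H}_{\pm}=\mathrm{Ran}\,\cal{W}_{\pm}$. Taking adjoints in the group relation gives $\cal{W}_{\pm}^*e^{i\sigma H}=e^{i\sigma H_0}\cal{W}_{\pm}^*$, whence $P_{\pm}e^{i\sigma H}=\cal{W}_{\pm}e^{i\sigma H_0}\cal{W}_{\pm}^*=e^{i\sigma H}P_{\pm}$; thus $P_{\pm}$ commutes with the unitary group and $\cal{H}_{\pm}$ reduces $H$. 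Since $\cal{W}_{\pm}$ is unitary from $L^2(\mathbb{R}^3)$ onto $\cal{H}_{\pm}$ and intertwines $e^{i\sigma H_0}$ with $e^{i\sigma H}|_{\cal{H}_{\pm}}$, Stone's theorem identifies the generators, giving $\cal{W}_{\pm}^{-1}(H|_{\cal{H}_{\pm}})\cal{W}_{\pm}=H_0$, which is the asserted unitary equivalence.
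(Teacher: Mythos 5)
Your proposal is correct and follows essentially the same route as the paper: reduce to the unitary groups $\Phi(\lambda)=e^{iT\lambda}$, show $\|\cal{E}(t-T)f-\cal{E}(t)e^{iTH_0}f\|\to 0$ on the dense set $\Omega$ by comparing phases, $\hat f$-arguments, and the spectral parameter of $W$ via the bound (\ref{der-k}), then pass to general $\Phi$ by approximation and obtain the reducing-subspace statement by taking adjoints. The paper packages the spectral-parameter comparison as the integral identity $W(|x|/(t-T),|x|)-W(|x|/t,|x|)=\int_{|x|/t}^{|x|/(t-T)}W_k(\alpha,|x|)\,d\alpha$, which is exactly the estimate you describe.
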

\begin{proof}
Again, we give details for $t\to +\infty$ only. First, consider
$\Phi(\lambda)=\exp(iT\lambda)$ with fixed $T$. Let $f\in \Omega$.
Then, (\ref{intertwining}) is equivalent to
\[
\lim_{t\to+\infty} \exp(itH) \cal{E}(t-T) f=\lim_{t\to+\infty}
\exp(itH) \cal{E}(t)\exp(iTH_0)f
\]
We have
\[
[\cal{E}(t-T)f](x)=(2i(t-T))^{-3/2}\exp\left[i|x|^2/(4t-4T)\right]\times
\]
\[
 W(|x|/(t-T), |x|)\left[\hat f(|x|/(2t-2T)\theta)\right]
(\hat{x})
\]
and
\[
[\cal{E}(t)\exp(iTH_0)f](x)=(2it)^{-3/2}\exp\left[i|x|^2/(4t)+iT|x|^2/(4t^2)\right]\times
\]
\[
W(|x|/t, |x|)\left[\hat f(|x|/(2t)\theta)\right] (\hat{x})
\]
Using the estimate $|x|<a t, a>0$, identity
\[
\Bigl[W(|x|/(t-T), |x|)-W(|x|/t, |x|)\Bigr]\hat
f(|x|/(2t)\theta)=\int\limits_{|x|/t}^{|x|/(t-T)} W_k(\alpha,
|x|)\hat f(|x|/(2t)\theta)d\alpha
\]
and the bound (\ref{der-k}), we easily get
\[
\|\cal{E}(t-T) f-\cal{E}(t)\exp(iTH_0)f\|_{L^2(\mathbb{R}^3)}\to 0
\]
as $t\to \infty$. If (\ref{intertwining}) is true for dense set
$\Omega$, then it must be true for $L^2(\mathbb{R}^3)$ since the
operators involved are bounded. Thus, (\ref{intertwining}) holds
for $\Phi(\lambda)=\exp(iT\lambda)$ with any $T$. The standard
approximation argument then implies (\ref{intertwining}) in the
general setting. We refer the reader to Theorem 4, p. 69,
\cite{Yafaev2} for details. In particular,
$E_\lambda(H)\cal{W}_{\pm}=\cal{W}_{\pm} E_\lambda (H_0)$ and
$\cal{W}_{\pm }\cal{D}(H)\subseteq \cal{D}(H_0)$. Of course, both
$\cal{D}(H)$ and $\cal{D}(H_0)$ are identical to
$H^2(\mathbb{R}^3)$.

Taking adjoint of (\ref{intertwining}), we get
\begin{equation}
E_\lambda(H_0)\cal{W}_{\pm}^*=\cal{W}_{\pm}^* E_\lambda(H)
\end{equation}
and therefore $\rm {Ker} \,\cal{W}_{\pm}^*$ reduces
$E_\lambda(H)$. That implies $\rm {Ran} \,\cal{W}_{\pm}$ reduces
$E_\lambda(H)$ as well. The rest of the Lemma now follows easily.
\end{proof}
The following corollary is straightforward
\begin{corollary}
For any $f=\cal{W}_{\pm}g\in \cal{H}$, the following asymptotics
holds true
\begin{equation}
\exp(-itH) f=\cal{E}(t)g+\bar{o}(1)
\end{equation}
where $\|\bar{o}(1)\|_{L^2(\mathbb{R}^3)}\to 0$ as
$t\to\pm\infty$.
\end{corollary}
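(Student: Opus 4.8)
The plan is to read the claim as a direct reformulation of the definition of $\cal{W}_{\pm}$, exploiting nothing more than the unitarity of the Schr\"odinger group. By Theorem \ref{main-theorem}, the hypothesis $f=\cal{W}_{\pm}g$ means precisely that
\[
f=\lim_{t\to\pm\infty}\exp(itH)\cal{E}(t)g
\]
with the limit taken in $L^2(\mathbb{R}^3)$. The goal is simply to transfer this convergence from the ``interaction picture'' back to the modified free picture.

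First I would introduce the remainder $\bar{o}(1):=\exp(-itH)f-\cal{E}(t)g$, so that the asserted asymptotics is exactly the statement $\|\bar{o}(1)\|_{L^2(\mathbb{R}^3)}\to 0$. Since $\exp(itH)$ is unitary on $L^2(\mathbb{R}^3)$, applying it inside the norm leaves the norm unchanged, and using $\exp(itH)\exp(-itH)=\mathrm{Id}$ one obtains
\[
\|\exp(-itH)f-\cal{E}(t)g\|=\left\|\exp(itH)\bigl(\exp(-itH)f-\cal{E}(t)g\bigr)\right\|=\|f-\exp(itH)\cal{E}(t)g\|.
\]

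The right-hand side tends to $0$ as $t\to\pm\infty$ by the defining property of $f=\cal{W}_{\pm}g$ recalled above, whence $\|\bar{o}(1)\|\to 0$, which is the claim. There is no genuine obstacle in this argument: once the existence of the limit $\cal{W}_{\pm}g$ is granted (the content of Theorem \ref{main-theorem}), the corollary follows immediately from unitarity of $\exp(itH)$. The only point requiring attention is careful bookkeeping of the sign conventions ($\exp(iHt)$ versus $\exp(-itH)$) so that the two exponentials cancel correctly; everything else is automatic, which is why the statement is flagged as straightforward.
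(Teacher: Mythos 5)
Your proof is correct and is exactly the intended argument: the paper offers no proof at all, merely labeling the corollary ``straightforward,'' and the unitarity computation $\|\exp(-itH)f-\cal{E}(t)g\|=\|f-\exp(itH)\cal{E}(t)g\|\to 0$ is precisely the standard justification. Nothing further is needed.
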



\begin{thebibliography}{99}

\bibitem{buslaev} V.S. Buslaev, V.B. Matveev,  Wave operators for the
Schr\"odinger equation with slowly decreasing potential, (Russian.
English summary), Teoret. Mat. Fiz., 2, 1970, no. 3, 367--376.

\bibitem{H1} H. Cornean, I. Herbst, C. G\'{e}rard,   Spiraling
attractors and quantum dynamics for a class of long-range magnetic
fields, MaPhySto Report 2002-36, to appear in J. Funct. Anal.

\bibitem{Denisov1} S.A. Denisov, Wave propagation through sparse potential barriers,
(preprint).

\bibitem{Denisov2} S.A. Denisov, Absolutely continuous spectrum for multidimensional
Schr\"odinger operators, IMRN, 2004, no. 74, 3963--3982.

\bibitem{Derezinski} J. Derezi\'{n}ski, C. G\'{e}rard,
Scattering theory of classical and quantum $N$-particle systems,
Texts and Monographs in Physics, Springer-Verlag, Berlin, 1997.

\bibitem{DG} J. Derezi\'{n}ski, C. G\'{e}rard, Long-range scattering
in the position representation,  J. Math. Phys.,  38,  1997, no.
8, 3925--3942.

\bibitem{dollard} J. Dollard,
Asymptotic convergence and the Coulomb interaction, J.
Mathematical Phys.,  5,  1964, 729--738.

\bibitem{Hormander} L. H\"ormander, The existence of wave
operators  in scattering theory, Math. Z., 146, 1976, 69--91.

\bibitem{H2} I. Herbst, E. Skibsted,  Quantum scattering for potentials
independent of $\vert x\vert $: asymptotic completeness for high
and low energies,  Comm. Partial Differential Equations,  29,
2004,  no. 3-4, 547--610.

\bibitem{S} T. Ikebe, Y. Saito,  Limiting absorption method and
absolute continuity for the Schr\"odinger operator,  J. Math.
Kyoto Univ.,  12,  1972, 513--542.

\bibitem{Perelman} G. Perelman,  Stability of the absolutely continuous
spectrum for multidimensional Schr\"odinger operators,  Int. Math.
Res. Not.,  2005,  no. 37, 2289--2313.

\bibitem{BP1} B. Perthame, L. Vega,  Morrey-Campanato
estimates for Helmholtz equations,  J. Funct. Anal., 164, 1999,
no. 2, 340--355.

\bibitem{Safronov} O. Safronov, On the a.c. spectrum of
multi-dimensional Schr\"odinger operators with slowly decaying
potentials, Comm. Math. Phys., Vol. 254, 2005, no. 2, 361--366.

\bibitem{Yafaev1} D. Yafaev,
Scattering theory: some old and new problems,
 Lecture Notes in Mathematics, 1735, Springer-Verlag, Berlin,
 2000.

\bibitem{Yafaev2} D. Yafaev,  Mathematical scattering theory, General
theory,
 Translations
of Mathematical Monographs, 105, American Mathematical Society,
Providence, RI, 1992

\bibitem{Yafaev3} D. Yafaev, Wave operators for Schr\"odinger
equation, Theoret. and Math. Phys., 45, no. 2, 1981, 992--998.



\end{thebibliography}
\end{document}